\newcommand{\FF}{\mathbb F}
\newcommand{\KK}{\mathbb K}
\def\cC{\mathcal C}
\def\cH{\mathcal H}
\def\cK{\mathcal K}
\def\cO{\mathcal O}
\def\cW{\mathcal W}
\def\cX{\mathcal X}
\def\cY{\mathcal Y}
\def\mod{\mbox{\rm mod}}
\def\deg{\mbox{\rm deg}}
\def\div{\mbox{\rm div}}
\def\fqq{{\mathbb F}_{q^2}}
\def\fqs{{\mathbb F}_{q^2}}
\def\fns{{\mathbb F}_{n^2}}
\newcommand{\aut}{\mbox{\rm Aut}}
\newcommand{\ga}{\alpha}
\newcommand{\gb}{\beta}
\newcommand{\g}{\gamma}
\newcommand{\gd}{\delta}
\newcommand{\gl}{\lambda}
\newcommand{\gr}{\rho}
\newtheorem{theorem}{Theorem}[section]
\newtheorem{proposition}[theorem]{Proposition}
\newtheorem{remark}[theorem]{Remark}
\theoremstyle{definition}
\newtheorem*{definition*}{Definition}
\newtheorem{case}[theorem]{Case}
\newtheorem*{proposition*}{Proposition}
\newtheorem*{corollary*}{Corollary}
\newtheorem*{lemma*}{Lemma}
\begin{document}

\title{Maximal curves from subcovers of the GK-curve}

\thanks{{\bf Keywords}: Maximal curves, AG-codes, GK-curve}

\thanks{{\bf Mathematics Subject Classification (2010)}: 11G20}

\thanks{Research supported by  the Italian
    Ministry MIUR, Strutture Geometriche, Combinatoria e loro Applicazioni, PRIN 2012 prot. 2012XZE22K, and by INdAM. The second author was partially supported by Siemens, project INST.MAT.-18.414.}

\author{Massimo Giulietti}
\address[M. Giulietti]{Dipartimento di Matematica e Informatica \\ Universit\`a degli Studi di Perugia \\ Via Vanvitelli, 1 \\ 06123 Perugia - Italy }
\email{massimo.giulietti@unipg.it}

%\author{Massimo Giulietti, Luciane Quoos, and Giovanni Zini}
%\address[M. Giulietti]
\author{Luciane Quoos}
\address[L. Quoos]{Instituto de Matem\'atica \\ Universidade Federal do Rio de Janeiro \\ Cidade Universit\'aria \\ Rio de Janeiro, RJ 21941-909\\ Brazil }
%%\email[L. Quoos]{luciane@im.ufrj.br}
\email{luciane@im.ufrj.br}

\author{Giovanni Zini}
\address[G. Zini]{Dipartimento di Matematica e Informatica ``Ulisse Dini'' \\ Universit\`a degli Studi di Firenze \\ Viale Morgagni, 67/a \\ 50134 Firenze - Italy}
\email{gzini@math.unifi.it}

\begin{abstract}
For every $q=n^3$
with $n$ a prime power greater than $2$, the GK-curve is an $\fqq$-maximal curve that is not
$\fqq$-covered by the Hermitian curve.
In this paper some Galois subcovers of the GK curve are investigated.  We describe explicit equations for some families of quotients of the GK-curve. New values in the spectrum of genera of $\fqs$-maximal curves are obtained. Finally, infinitely many further examples of maximal curves that cannot be Galois covered by the Hermitian curve are provided.
\end{abstract}

\maketitle

\section{Introduction}
Let $\fqq$ be a finite field with $q^2$ elements, where $q$ is a
power of a prime $p$. By a curve $\cX$ over $\fqq$ we mean a
projective, absolutely irreducible, non-singular algebraic curve defined over $\fqq$, and it is called $\fqq$-maximal if the number $\cX(\fqs)$ of its
$\fqq$-rational points attains the  Hasse-Weil upper bound
$$q^2+1+2gq,$$
where $g$ is the genus of the curve. Maximal curves have
interesting properties and have also been investigated for their
applications in Coding Theory: sometimes the best known linear codes over finite fields of square order are obtained as one-point AG-codes from maximal curves, see e.g. \cite{Fanali,NW2,MINT}.

One of the most important problems about maximal curves is determining the possible genera of maximal curves over $\fqs$. For a given $q$, the highest value of $g$ for which an $\fqs$-maximal curve of genus $g$ exists is $q(q-1)/2$, and equality holds if and only if 
the curve is isomorphic over $\fqs$ to the Hermitian curve. 
%The upper part in the spectrum of genera of $\fqs$-maximal curves contains only few values, and although the whole spectrum depends on
%the nature of $q$, the highest three genera in the spectrum have the same positions
%independently of $q$ \cite[Sect. 10.5]{hirschfeld-korchmaros-torres2008}.
By a result of Serre, cited by Lachaud in \cite{L}, any $\fqs$-rational curve which is $\fqs$-covered by an $\fqs$-maximal curve is also $\fqs$-maximal.
However, not every maximal curve can be covered by the Hermitian curve. In fact,
in 2009 Giulietti and Korchm\'aros constructed a maximal
curve over $\mathbb F_{n^6}$ which cannot be covered by the Hermitian curve whenever $n>2$; this curve is nowadays referred to as the GK-curve.

Serre's covering result has made it possible to obtain several genera of
$\fqs$-maximal curves by applying the Riemann-Hurwitz formula to quotient curves of a known $\fqs$-maximal curve, such as  the Hermitian curve, the Suzuki curve, the Ree curve, and the GK-curve, see \cite{AQ, FG, GSX, GTK}.
However, sometimes it can be hard to give {\em explicit equations} for a quotient curve. This problem is clearly relevant for applications to Coding Theory, but has been attacked only recently for the GK-curve. Equations of some quotients of the GK-curve with respect to $p$-groups of automorphisms  have been obtained in \cite{SP,TTT}.

In this paper we deal with Galois subcovers of the GK-curve with respect to coverings of
%MG
 degree not divisible by $p$. 
We provide explicit equations for several such subcovers, see Theorem \ref{equations} and Equation \eqref{XsuK}, and in some cases we give an explicit description of the Galois group of the covering; see Section \ref{quattro}. Our starting point for computing such equations is a new non-singular model of the GK-curve; see Section \ref{NewModel}. The genera of these subcovers are computed, see Theorem \ref{genera} and Formula \eqref{gXsuK}, and sometimes they are new values in the spectrum of genera of $\FF_{n^6}$-maximal curves; see Remark \ref{61}. Interestingly, it often happens that these curves are not Galois covered by the Hermitian curve; see Theorems \ref{NotGaloisCovered}, \ref{NotGaloisCoveredII}, \ref{NotGaloisCoveredIII}. In some cases we are able to show they are not covered by the Hermitian curve at all; see Table \ref{tabella}.

\section{A new model of the GK-curve}\label{NewModel}

Let $p$ be a prime, $n$ a power of $p$, $q=n^3$, $\mathbb{K}=\bar{\FF}_{q^2}$ the algebraic closure of $\fqs$. Let $\cC$ be the so-called GK curve, which is $\fqs$-maximal and is defined by the affine equations
$$ \cC: \left\{
\begin{array}{c}
Z^{n^2-n+1}=Y\frac{X^{n^2}-X}{X^{n}+X}\\
Y^{n+1}=X^{n}+X \quad\;\,
\end{array} \right. .$$
Let $\rho\in\fns$ with $\rho+\rho^n=1$. Consider the $\fns$-projectivity $\varphi$ associated to the matrix $A$, where
$$
A=\begin{pmatrix} 1 & 0 & 0 & 1-\gr \\ 0 & 1 & 0 & 0 \\ 0 & 0 & -1 & 0 \\ 1 & 0 & 0 &-\gr \end{pmatrix}%,
%\qquad  A^{-1}=\begin{pmatrix} \gr & 0 & 0 & \gr^n \\ 0 & 1 & 0 & 0 \\ 0 & 0 & -1 & 0 \\ 1 & 0 & 0 &-1 \end{pmatrix}
.
$$
Then $\cX=\varphi(\cC)$ has equations
$$ \cX: \left\{
\begin{array}{c}
Z^{n^2-n+1}=Y\frac{X^{n^2}-X}{X^{n+1}-1}\\
Y^{n+1}=X^{n+1}-1 \quad\;
\end{array} \right. .$$
%The $\fqs$-maximality of $\cX'$ can be prove independently, which is a consequence of the Natural Embedding Theorem %(\cite[Th. 3.6]{KT}).
%
%\begin{proposition}
%Let $a,c\in\fq$, $b\in\fqs$ with $ac-b^{q+1}\neq0$. Let
%$$g(X)=aX^{q+1}+(b^q+b)X^q+bX+c$$
%and $f(X)\in\fqs[X]$ a divisor of $g(X)$ with $\deg(f)\leq d$, where $d$ is a divisor of $q+1$. If
%$$ f(X)^{\frac{q+1}{d}-1}-\frac{g(X)}{f(X)} $$
%is the $d$-th power of a polynomial $h(X)\in\fqs[X]$, then the curve with equations
%$$ \begin{cases} Z^{\frac{q+1}{d}}=Y\,h(X) \\ Y^d=f(X) \end{cases} $$
%is $\fqs$-maximal
%\end{proposition}

%This criterion follows from the Natural Embedding Theorem (\cite[Th. 3.6]{KT}).

We will consider subgroups of the following tame $\fqs$-automorphism group $G$ of $\cX$ of size $(n+1)^2(n^2-n+1)$:
\begin{equation}\label{groupG}
 G=\left\{g_{a,b,\gl}:(X,Y,Z,T)\mapsto(aX,bY,\gl Z,T)\mid a^{n+1}=b^{n+1}=1,\gl^{n^2-n+1}=ab\right\}.
\end{equation}
%$G'$ is tame, as $|G'|=(n+1)^2(n^2-n+1)$.

By conjugation, an $\fqs$-automorphism group $G^A=A^{-1}G A$ of $\cC$
%MG
 is obtained:
$$ G^A=\left\{g_{a,b,\gl}^A
\mid a^{n+1}=b^{n+1}=1,\gl^{n^2-n+1}=ab \right\},\quad \textrm{where} $$
$$ g_{a,b,\gl}^A=
\begin{pmatrix} a\rho+\rho^n & 0 & 0 & a\rho-a\rho^2-\rho^{n+1} \\ 0 & b & 0 & 0 \\ 0 & 0 & \lambda & 0 \\ a-1 & 0 & 0 & a-a\rho+\rho \end{pmatrix}. $$

According to the notation of \cite{FG}, we compute the projection  $\overline{G^A}$ of $G^A$ over $\textrm{PGU}(3,n)$ and the intersection $G_\Lambda^A$ of $G^A$ with
\begin{equation}\label{Lambda}
\Lambda=\left\{\alpha_{\lambda}:(X,Y,Z,T)\mapsto(X,Y,\lambda Z,T)\mid \gl^{n^2-n+1}=1 \right\}:
\end{equation}
$$ G_\Lambda^A = \Lambda,\quad \overline{G^A}=\left\{ \bar{g}_{a,b} \mid a^{n+1}=b^{n+1}=1 \right\},\quad \textrm{where} $$
$$ \bar{g}_{a,b}=\begin{pmatrix} a\rho+\rho^n & 0 & a\rho-a\rho^2-\rho^{n+1} \\ 0 & b & 0 \\ a-1 & 0 & a-a\rho+\rho \end{pmatrix}. $$

Note that $\overline{G^A}=\overline{A^{-1}GA}=\bar{A^{-1}}\bar{G}\bar{A}$, where $\bar{A}$ (resp. $\bar{G}$) is obtained by deleting the third row and column in $A$ (resp. in the matrices of $G$).
Note also that the set $\cX(\fqs)$ of $\fqs$-rational points of $\cX$ has a short orbit $\cO$ under the action of $\aut(\cX)$, consisting of the set of $\fns$-rational points of the cone $Y^{n+1}=X^{n+1}-1$ in the plane $Z=0$ (see \cite[Section 3]{FG}). Hence, $\overline{G^A}$ acts naturally on $\bar{\cO}=\cH(\fns)$, where $\cH$ is the Hermitian curve with equation $Y^{n+1}=X^{n+1}-1$.

\section{A family of Galois subcovers of $\cX$}\label{SecEq}

In this section we provide
%MG
 equations and genera for a family of curves covered by the curve $\cX$ depending on three parameters.
% We show in Section 4, that for many choices of the parameters, the curves obtained are actually the fixed field by subgroups of $G$.
 Let $d_1,d_2,d_3$ be divisors of $n+1$, and consider the rational functions
 %MG
$$ u=x^\frac{n+1}{d_1},\,v=y^\frac{n+1}{d_2},\,w=z^\frac{n+1}{d_3} $$
in the function field $\mathbb{K}(x,y,z)$ of $\cX$. Then for the subfield $\mathbb{K}(u,v,w)$ we have the relations
\begin{equation}\label{uvw}
w^{d_3(n^2-n+1)}
=u^{d_1}(u^{d_1}-1)
\left(\frac{u^{d_1(n-1)}-1}{u^{d_1}-1}\right)^{n+1}, \qquad 
v^{d_2}=u^{d_1}-1.
\end{equation}
Let $L$ be the following subgroup of the group $G$ given in \eqref{groupG}:
$$L=\left\{(X,Y,Z,T)\mapsto(\gl^3b^nX,bY,\gl Z,T)\mid b^{n+1}=\gl^{n+1}=1 \right\}. $$

Clearly, $L$ has order $(n+1)^2$, and the fixed field $Fix(L)$ contains $x^{n+1}$, $y^{n+1}$ and $z^{n+1}$.
Actually, $Fix(L)$ coincides with $\KK(x^{n+1},y^{n+1},z^{n+1})$, since $\KK(x^{n+1},y^{n+1},z^{n+1})$ coincides with $\KK(x^{n+1},z^{n+1})$ and the degree of the extension $$\KK(x,y,z)/\KK(x^{n+1},z^{n+1})$$ is at most $(n+1)^2$. Then
$Fix(L)\subseteq\mathbb{K}(u,v,w)$ and we can consider the double extension of function fields
$$ Fix(L)\subseteq\mathbb{K}(u,v,w)\subseteq\mathbb{K}(x,y,z).$$

As $\mathbb{K}(x,y,z)/Fix(L)$ is a Galois extension, 
%MG
 $\mathbb{K}(x,y,z)/\mathbb{K}(u,v,w)$ is  Galois as well, that is, $\mathbb{K}(u,v,w)$ is the function field of the quotient curve of $\cX/H$
 %MG
 of $\cX$ with respect to some automorphism group $H\leq L$.

In order to provide irreducible equations for $\cX/H$, consider the rational function $\alpha\in\mathbb{K}(u,v)$ defined as
$$ \alpha = u^{d_1}(u^{d_1}-1)\left(\frac{u^{d_1(n-1)}-1}{u^{d_1}-1}\right)^{n+1}. $$
After some computation, we get the principal divisor of $\alpha$ in $\KK(u,v)$:
\begin{equation}\label{PrincipalDivisor}
\div(\alpha) = d_1 \sum_{i=1}^{d_2}Q_{0,i} + d_2 \sum_{i=1}^{d_1}Q_{\ga_i} + (n+1) \sum_{i=1}^{d_1(n-2)}\sum_{j=1}^{d_2}Q_{\gb_i,j} - \frac{d_1d_2n(n-1)}{(d_2,2d_1)}\sum_{i=1}^{(d_2,2d_1)}Q_{\infty,i},
\end{equation}
where $Q_{0,i}$ lies over the zero $P_0$ of $u$, $Q_{\ga_i}$ lies over the zero $P_{\ga_i}$ of $u^{d_1}-1$, $Q_{\gb_i,j}$ lies over the zero $P_{\gb_i}$ of $(u^{d_1(n-1)}-1)/(u^{d_1}-1)$, and $Q_{\infty,i}$ lies over the pole $P_\infty$ of $u$. Let
$$ D = \gcd\left(d_1,d_2,n+1,\frac{d_1d_2n(n-1)}{(d_2,2d_1)}\right), $$
$$ M = \gcd\left(D,d_3(n^2-n+1)\right) = \gcd\left(d_1,d_2,d_3(n^2-n+1)\right). $$

If $M=1$, then by equations \eqref{uvw}, $\mathbb{K}(u,v,w)/\mathbb{K}(u,v)$ is a Kummer extension of degree $d_3(n^2-n+1)$, and the quotient curve has irreducible equations
$$ 
\cX/H: \left\{\begin{array}{l}
W^{d_3(n^2-n+1)}=U^{d_1}V^{d_2}\left(\frac{U^{d_1(n-1)}-1}{U^{d_1}-1}\right)^{n+1}\\
V^{d_2}=U^{d_1}-1 \qquad\qquad\qquad\qquad\qquad\qquad\quad\;
\end{array} \right. .
$$
%MG

More generally, for $M\geq1$, both sides of the
%MG
 first equation in  \eqref{uvw} are a power of $M$, and we can factor the equation to obtain the irreducible curve
 %MG Capital letters
\begin{equation}\label{XsuH}
\cX/H: \left\{ \begin{array}{c}
W^{\frac{d_3}{M}(n^2-n+1)}=U^\frac{d_1}{M}V^{\frac{d_2}{M}}\left(\frac{U^{d_1(n-1)}-1}{U^{d_1}-1}\right)^{\frac{n+1}{M}} \\
V^{d_2}=U^{d_1}-1\qquad\qquad\qquad\qquad\qquad\quad\;
\end{array} \right. .
\end{equation}

\begin{remark}\label{rem7dic}
%MG
We are in a position to compute the order of the group $H$, that is, the degree of the extension $[\KK(x,y,z):\KK(u,v,w)]$.
By the Fundamental Equality (see \cite[Th. 3.1.11]{Sti}), the divisor of zeros of $x$ in $\KK(x,y,z)$ has degree $\deg(x)_0=[\KK(x,y,z):\KK(x)]=n^3+1$, and
$$ [\KK(x,y,z):\KK(u)]=\deg(x^\frac{n+1}{d_1})_0=\frac{(n+1)^2(n^2-n+1)}{d_1},\quad [\KK(u,v):\KK(u)]=d_2;  $$
hence,
$$ [\KK(x,y,z):\KK(u,v)]=\frac{(n+1)^2(n^2-n+1)}{d_1,d_2},\; [\KK(u,v,w):\KK(u,v)]=\frac{d_3(n^2-n+1)}{M}. $$
Therefore
$$ |H| =[\KK(x,y,z):\KK(u,v,w)]=\frac{M(n+1)^2}{d_1d_2d_3}. $$
\end{remark}

The general equations \eqref{XsuH} of $\cX/H$ have been obtained by working on $d_1,d_2,d_3(n^2-n+1)/M$. If we start from $d_1/M,d_2,d_3$, or from $d_1,d_2/M,d_3$, then we get irreducible equations for other quotient curves:
%\textcolor{blue}{
%these two equations below are such that the degree $[\KK(u,v,w):\KK(u,v)]= d_3(n^2-n+1)$ and $[\KK(u,v):\KK(u)]=d_2$ and $d_2/M$ respectively, different from the degree of the equations \eqref{XsuH}. The first one is like $M=1$ and the second one is not.
%} 
%\textcolor{red}{ok, partially: by repeating the same argument as in the remark, I think that the degree $[\KK(u,v):\KK(u)]=d_2$ is:
%$$ \frac{M(n+1)^2}{d_1d_2d_3}\quad\textrm{ for the curve \eqref{XsuH}}, $$
%$$ \frac{(n+1)^2}{d_1d_2d_3}\quad\textrm{ for the first curve below}, $$
%$$ \frac{M(n+1)^2}{d_1d_2d_3}\quad\textrm{ for the second curve below}. $$
%}
%\textcolor{blue}{I believe you are right, but it sounds strange anyway. So the modification in the next Theorem is ok too}

$$ \left\{ \begin{array}{c}
W^{d_3(n^2-n+1)}=U^\frac{d_1}{M}\left(U^{\frac{d_1}{M}(n-1)}-1\right)\left(\frac{U^{\frac{d_1}{M}(n-1)}-1}{U^{\frac{d_1}{M}}-1}\right)^n \\
V^{d_2}=U^{\frac{d_1}{M}}-1 \qquad\qquad\qquad\qquad\qquad\qquad\qquad\quad\;
\end{array} \right. ,$$
$$ \left\{ \begin{array}{c}
W^{d_3(n^2-n+1)}=U^{d_1}\left(U^{d_1(n-1)}-1\right)\left(\frac{U^{d_1(n-1)}-1}{U^{d_1}-1}\right)^n \\
V^{\frac{d_2}{M}}=U^{d_1}-1 \qquad\qquad\qquad\qquad\qquad\qquad\quad\quad\;
\end{array} \right. .$$

%Over the function fields of these curves we can also consider the rational functions $$u,%\qquad :v:w^{\frac{n^2-n+1}{e}}:1),$$ 
%MG if we want to talk about morphism we should specify domain, codomain, etc
For any divisor $e$ of $n^2-n+1$ let $s=w^{\frac{n^2-n+1}{e}}$; then $\KK(u,v,s)$ is the function field of new subcovers of $\cX$. The degree of the covering can be easily computed also for these subcovers, arguing as in Remark \ref{rem7dic}.

To sum up, the following result is obtained.

\begin{theorem}\label{equations}
Let $d_1$, $d_2$, and $d_3$ be divisors of $n+1$,  and let $e$ be a divisor of $n^2-n+1$. For
$$
M=\gcd\left(d_1,d_2,d_3(n^2-n+1)\right),
$$
the following equations define $\mathbb F_{n^6}$-maximal curves which are Galois subcovers of $\cX$:
\begin{equation}\label{C1}
 \cC_1: \left\{ \begin{array}{c}
S^{\frac{d_3}{M}e}=U^\frac{d_1}{M}V^{\frac{d_2}{M}}\left(\frac{U^{d_1(n-1)}-1}{U^{d_1}-1}\right)^{\frac{n+1}{M}} \\
V^{d_2}=U^{d_1}-1 \qquad\qquad\qquad\quad\;\:
\end{array} \right. ,
\end{equation}
\begin{equation}\label{C2}
 \cC_2: \left\{ \begin{array}{c}
S^{d_3e}=U^\frac{d_1}{M}\left(U^{\frac{d_1}{M}(n-1)}-1\right)\left(\frac{U^{\frac{d_1}{M}(n-1)}-1}{U^{\frac{d_1}{M}}-1}\right)^n \\
V^{d_2}=U^{\frac{d_1}{M}}-1 \qquad\qquad\qquad\qquad\qquad\quad\;\:
\end{array} \right. ,
\end{equation}
\begin{equation}\label{C3}
 \cC_3: \left\{ \begin{array}{c}
S^{d_3e}=U^{d_1}\left(U^{d_1(n-1)}-1\right)\left(\frac{U^{d_1(n-1)}-1}{U^{d_1}-1}\right)^n \\
V^{\frac{d_2}{M}}=U^{d_1}-1 \qquad\qquad\qquad\qquad\quad\quad\;\;
\end{array} \right. .
\end{equation}
The degree of the covering is $\frac{(n^2-n+1)M(n+1)^2}{ed_1d_2d_3}$ for $\cC_1$ and $\cC_3$, and $\frac{(n^2-n+1)(n+1)^2}{ed_1d_2d_3}$ for $\cC_2$.
\end{theorem}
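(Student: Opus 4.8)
The plan is to recognize $\cC_1,\cC_2,\cC_3$ as the explicit subcovers of $\cX$ constructed in Section~\ref{SecEq} and to derive their maximality from Serre's covering theorem, leaving only the irreducibility of the displayed systems and the degree count as genuine work. The starting observation is that the function field of each $\cC_i$ is a subfield $F_i$ of $\KK(x,y,z)$ generated by monomials in $x,y,z$: for $\cC_1$ one takes $F_1=\KK(u,v,s)$ with $u,v,w$ as in \eqref{uvw} and $s=w^{(n^2-n+1)/e}$, and $F_2,F_3$ are the fields attached to the two normalizations obtained before the statement by absorbing the factor $M$ into $d_1$, respectively into $d_2$. I would first verify that each $F_i$ is the fixed field of a subgroup $H_i$ of the group $G$ in \eqref{groupG}: an element $g_{a,b,\lambda}\in G$ fixes a given monomial generator precisely when $a,b,\lambda$ satisfy an explicit root-of-unity condition, and these conditions cut out a subgroup $H_i\le G$. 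Since $G$ is abelian and tame, $\KK(x,y,z)/F_i$ is Galois, so $\cX\to\cC_i$ is a Galois subcover of $\cX$.

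The maximality and the field of definition can then be settled in one stroke. All the data involved---the equations of $\cX$, the group $G$, and the monomial generators of each $F_i$---are $\fqs$-rational, because the orders $n+1$ and $n^2-n+1$ of the relevant roots of unity both divide $n^6-1$, as $(n+1)(n^2-n+1)=n^3+1$ divides $n^6-1$. Hence each $\cC_i$ and each covering $\cX\to\cC_i$ is defined over $\FF_{n^6}$. Since $\cX$ is $\FF_{n^6}$-maximal, Serre's result \cite{L}---that an $\FF_{n^6}$-curve $\FF_{n^6}$-covered by an $\FF_{n^6}$-maximal curve is itself $\FF_{n^6}$-maximal---applies directly and gives the maximality assertion for all three families.

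It remains to check that \eqref{C1}--\eqref{C3} are irreducible models and to compute the covering degrees. Irreducibility is controlled by Kummer theory applied to the relation $w^{d_3(n^2-n+1)}=\alpha$ over the relevant base field: the degree contributed by $w$ equals $d_3(n^2-n+1)$ divided by the greatest common divisor of $d_3(n^2-n+1)$ with the multiplicities occurring in $\div(\alpha)$. Substituting the principal divisor \eqref{PrincipalDivisor} identifies this gcd over $\KK(u,v)$ as $M=\gcd(d_1,d_2,d_3(n^2-n+1))$, so that after extracting the $M$-th root the first relation of \eqref{uvw} factors into the irreducible equation of \eqref{C1}; extracting that root in the other two admissible ways---lowering the exponent of $U$, respectively of $V$---produces the irreducible equations of \eqref{C2} and \eqref{C3}. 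The substitution $s=w^{(n^2-n+1)/e}$ then replaces the $W$-exponent by the stated $S$-exponent without affecting irreducibility. With irreducibility in hand, the degrees come from the Fundamental Equality exactly as in Remark~\ref{rem7dic}: beginning from $\deg(x)_0=n^3+1$ one combines the relative degrees of the generators $u,v,w$ over one another as in that remark, together with the factor $[\KK(u,v,w):\KK(u,v,s)]=(n^2-n+1)/e$ coming from $s$.

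The step I expect to be the main obstacle is tracking the factor $M$ through these degree computations, since it is exactly this that makes $\cC_2$ carry the degree $\frac{(n^2-n+1)(n+1)^2}{ed_1d_2d_3}$ while $\cC_1$ and $\cC_3$ retain the extra factor $M$. The subtlety is that the three normalizations change the base field---shrinking it in the $U$-direction for $\cC_2$, in the $V$-direction for $\cC_3$, or not at all for $\cC_1$---and over each base the multiplicities of $\div(\alpha)$, and hence both the Kummer degree contributed by $w$ and the relative degree of the first generator, must be recomputed from \eqref{PrincipalDivisor}. Getting these two competing contributions to combine into the asserted degrees---and checking in each case that the extracted root genuinely lies in $\KK(x,y,z)$, so that the displayed equations are irreducible over $\KK$---is where the care is needed; once this bookkeeping is done, the remaining assertions follow from Serre's theorem and the Fundamental Equality.
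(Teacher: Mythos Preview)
Your proposal is correct and follows essentially the same route as the paper's Section~\ref{SecEq}: identify the function fields $\KK(u,v,w)$ and $\KK(u,v,s)$ as intermediate fields of an abelian Galois extension of $\KK(x,y,z)$, derive irreducibility from Kummer theory via the divisor \eqref{PrincipalDivisor}, and read off the covering degree from the Fundamental Equality as in Remark~\ref{rem7dic}. The only minor difference is that you work with the full group $G$ of \eqref{groupG} rather than the subgroup $L$ the paper uses, which makes the Galois claim for general $e\mid n^2-n+1$ slightly more transparent (since $Fix(G)=\KK(x^{n+1})\subseteq\KK(u,v,s)$ directly), but otherwise the argument is the same.
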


Note that, when $\frac{(n^2-n+1)M(n+1)^2}{ed_1d_2d_3}=1$
or $\frac{(n^2-n+1)(n+1)^2}{ed_1d_2d_3}=1$, this theorem provides models for the
%MG
 GK-curve; in some cases they are plane models.
%\textcolor{blue}{
%When $\frac{eM(n+1)^2}{d_1d_2d_3}=1$, this result provide equations for the GK-curve. Choosing $d_1=1, d_2=d_3=n+1, e=e=n^2-n+1$, in the curve $\cC_1$,
%the plane equation becomes
%$$
%S^{n^3+1}=(V^{n+1}+1)\left(\frac{((V^{n+1}+1)^{(n-1)}-1)^{n+1}}{V^{(n^2+n)}}\right).
%$$
%Does this curve have the same genus of the GK now? Yes, using the genus formula in Theorem from section Genus.
%}

Now we compute the genera of the curves described in Theorem \ref{equations} for $e=n^2-n+1$, i.e. $s=w$. This is done via Kummer theory.

%For the curves $\cC_2$ and $\cC_3$, we calculate the ramification indices in the double Kummer extension
%$$ \KK(u)\subseteq\KK(u,v)\subseteq\KK(u,v,w). $$
%For the curve $\cC_1$, we also need to consider the double Kummer extension
%$$ \KK(v)\subseteq\KK(u,v)\subseteq\KK(u,v,w). $$

%Then we apply the Riemann-Hurwitz genus formula and obtain the following result
%, whose detailed proof is omitted since it goes beyond the scope of
%the present extended abstract
%.

\begin{theorem}\label{genera}
Let $e=n^2-n+1$. Then the genera of the curves $\cC_1$, $\cC_2$, and $\cC_3$ described in Theorem {\rm \ref{equations}} are the following:
\begin{small}
\begin{equation}\label{gC1}
g(\cC_1) = 1 + \frac{1}{2}\Big[d_1d_2\frac{d_3(n^2-n+1)}{M}(n-1)-d_2(\frac{d_1}{M},\frac{d_3(n^2-n+1)}{M})-d_1(\frac{d_2}{M},\frac{d_3(n^2-n+1)}{M})+
\end{equation}
$$-d_1d_2(n-2)(\frac{d_3(n^2-n+1)}{M},\frac{n+1}{M})-\left((d_1,d_2)\frac{d_3(n^2-n+1)}{M},\frac{2d_1d_2}{M}\right)\Big]$$
\end{small}
and, for $i=2,3$,
\begin{small}
\begin{equation}\label{gCi}
g(\cC_i) = 1 + \frac{1}{2}\left[hkr(n-1)-k(h,r)-h(k,r)-hk(n-2)(r,n+1)-\left((h,k)r,2hk\right)\right],
\end{equation}
\end{small}
where
\begin{small}
$$ r=d_3(n^2-n+1),\quad
h= \left\{ \begin{array}{c} d_1/M\;\,\textrm{for}\;\cC_2 \\ d_1\qquad\textrm{for}\;\cC_3 \end{array} \right. ,\quad
k= \left\{ \begin{array}{c} d_2\qquad\textrm{for}\;\cC_2 \\ d_2/M\;\,\textrm{for}\;\cC_3 \end{array} \right. .
$$
\end{small}
\end{theorem}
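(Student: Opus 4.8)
The plan is to realize each $\cC_i$ as the top of a two-step tower $\KK(u)\subseteq\KK(u,v)\subseteq\KK(u,v,w)$ and to obtain its genus by applying the Riemann--Hurwitz formula to the top (Kummer) step, the genus of the intermediate field $\KK(u,v)$ being computed by the same method applied to the bottom step. Since every degree occurring divides either $n+1$ or $d_3(n^2-n+1)$, and $p\nmid n+1$, $p\nmid n^2-n+1$, all extensions in sight are tame, which is exactly what makes Kummer theory and the tame Riemann--Hurwitz formula available. For the bottom step I would view $\KK(u,v)/\KK(u)$ as the Kummer extension of degree $d_2$ with radicand $u^{d_1}-1$: its $d_1$ simple zeros are totally ramified, the place $u=0$ is unramified, and the pole $P_\infty$ of $u$ (of order $d_1$) splits into $\gcd(d_1,d_2)$ places. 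Riemann--Hurwitz then gives $2g(\KK(u,v))-2=d_1d_2-d_1-d_2-\gcd(d_1,d_2)$, and the analogue with $(d_1,d_2)$ replaced by $(h,k)$ for $\cC_2,\cC_3$.

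For the top step (with $e=n^2-n+1$, so $s=w$) I would set $\beta=u^{d_1/M}v^{d_2/M}\big(\tfrac{u^{d_1(n-1)}-1}{u^{d_1}-1}\big)^{(n+1)/M}\in\KK(u,v)$. The divisibilities $M\mid d_1$, $M\mid d_2$, $M\mid n+1$ (consequences of $M=\gcd(d_1,d_2,d_3(n^2-n+1))$ together with $d_1,d_2\mid n+1$) make $\beta$ well defined with $\beta^M=\alpha$, so that $\div(\beta)=\tfrac1M\div(\alpha)$ is read off \eqref{PrincipalDivisor}. The field $\KK(u,v,w)$ is then the Kummer extension of $\KK(u,v)$ of degree $m=d_3(n^2-n+1)/M$ defined by $w^m=\beta$ (this degree is the one computed in Remark \ref{rem7dic}, and $\gcd(m,p)=1$ makes it tame). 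From \eqref{PrincipalDivisor} the valuation $v_P(\beta)$ equals $d_1/M$ at each of the $d_2$ places over $u=0$, equals $d_2/M$ at each of the $d_1$ places over the zeros of $u^{d_1}-1$, equals $(n+1)/M$ at each of the $d_1d_2(n-2)$ places over the zeros of $(u^{d_1(n-1)}-1)/(u^{d_1}-1)$, and has total pole order $d_1d_2n(n-1)/M$ over $P_\infty$. Each place $P$ acquires ramification index $e_P=m/\gcd(m,v_P(\beta))$ with tame different exponent $e_P-1$.

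Feeding these data into $2g(\cC_1)-2=m\big(2g(\KK(u,v))-2\big)+\sum_P\big(m-\gcd(m,v_P(\beta))\big)$, I expect the ``$m$-parts'' of the ramification contributions to recombine with $m(2g(\KK(u,v))-2)$: the place counts $d_2$, $d_1$, $d_1d_2(n-2)$ together with the $\gcd(d_1,d_2)$ places over $P_\infty$ reproduce the terms $-d_1-d_2-\gcd(d_1,d_2)$ of $2g(\KK(u,v))-2$ with the opposite sign, collapsing everything to the single main term $d_1d_2\,m\,(n-1)$. What survives are the finite-place corrections $-d_2\gcd(m,\tfrac{d_1}{M})-d_1\gcd(m,\tfrac{d_2}{M})-d_1d_2(n-2)\gcd(m,\tfrac{n+1}{M})$ and the infinite contribution, which by the identity $c\gcd(a,b)=\gcd(ca,cb)$ takes the form $-\gcd\!\big((d_1,d_2)m,\tfrac{d_1d_2n(n-1)}{M}\big)$. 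This is already \eqref{gC1} up to the shape of the last term.

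The hard part is precisely the bookkeeping over $P_\infty$ together with this last simplification. I would first confirm that the radicand's pole order $d_1d_2n(n-1)/M$ distributes over exactly $\gcd(d_1,d_2)$ places, since it is this count (not a larger one) that forces the clean cancellation above; this is the one delicate splitting and I expect it to be the main obstacle. I would then reduce $\gcd\!\big((d_1,d_2)m,\tfrac{d_1d_2n(n-1)}{M}\big)$ to $\gcd\!\big((d_1,d_2)m,\tfrac{2d_1d_2}{M}\big)$ by showing the surplus factor $\tfrac{n(n-1)}{2}$ is coprime to the first argument, using $\gcd(n^2-n+1,n(n-1))=1$, $\gcd(n+1,n)=1$ and $\gcd(n+1,n-1)\mid2$, so that $2$ is the only common prime that can survive. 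Finally $\cC_2$ and $\cC_3$ follow by the identical template: their radicands coincide with $\alpha$ after the substitution $d_1\mapsto d_1/M$ (for $\cC_2$) or over the base curve $v^{d_2/M}=u^{d_1}-1$ (for $\cC_3$), the Kummer degree is now $r=d_3(n^2-n+1)$ with no division by $M$, and the same recombination and reduction yield \eqref{gCi} with $(h,k,r)$ in place of $(d_1,d_2,m)$.
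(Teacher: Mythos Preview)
Your proposal is correct and follows essentially the same route as the paper: both build the tower $\KK(u)\subseteq\KK(u,v)\subseteq\KK(u,v,w)$, compute $g(\KK(u,v))$ from the degree-$d_2$ Kummer extension $v^{d_2}=u^{d_1}-1$, then read the ramification in the top Kummer extension of degree $m=d_3(n^2-n+1)/M$ from the divisor \eqref{PrincipalDivisor} and apply Riemann--Hurwitz. Your explicit discussion of the recombination yielding the main term $d_1d_2m(n-1)$ and of the reduction $\gcd\!\big((d_1,d_2)m,\tfrac{d_1d_2n(n-1)}{M}\big)=\gcd\!\big((d_1,d_2)m,\tfrac{2d_1d_2}{M}\big)$ is in fact more detailed than the paper, which simply records the different degree and asserts the simplification.
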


\begin{proof}
We start with $\cC_1$, and use the notation of equation \eqref{PrincipalDivisor} for the zeros and poles of
$$\ga = u^\frac{d_1}{M}v^{\frac{d_2}{M}}\left(\frac{u^{d_1(n-1)}-1}{u^{d_1}-1}\right)^{\frac{n+1}{M}} \in\KK(u,v).$$

By \cite[Prop. 3.7.3]{Sti} we compute the ramification indices in the Kummer extension $\KK(u,v)/\KK(u)$ of degree $d_2$: 
$$ e(Q_{\ga_i}\mid P_{\ga_i})=d_2, e(Q_{0,i}\mid P_0)=1, e(Q_{\gb_i,j}\mid P_{\gb_i})=1, e(Q_{\infty,i}\mid P_\infty)=\frac{d_2}{\gcd(d_1,d_2)}. $$

The Riemann-Hurwitz formula applied to the extension $\KK(u,v)/\KK(u)$ yields
$$ g(\KK(u,v)) = 1+\frac{1}{2}(d_1d_2-d_1-d_2-\gcd(d_1,d_2)). $$

Let $\bar{P}_0$ be the zero and $\bar{P}_\infty$ the pole of $v$ in $\KK(v)$. Then, in the extension $\KK(u,v)/\KK(v)$, the places lying over $\bar{P}_0$ are $Q_{\ga_1},\ldots,Q_{\ga_{d_1}}$, with ramification index $1$; the places over $\bar{P}_\infty$ are $Q_{\infty,1},\ldots,Q_{\infty,(d_1,d_2)}$, with ramification index $d_1/\gcd(d_1,d_2)$.

We compute the ramification indices in the Kummer extension $\KK(u,v,s)/\KK(u,v)$ of degree $\frac{d_3}{M}(n^2-n+1)$. We have
$$ v_{Q_{\ga_i}}(\ga) = e(Q_{\ga_i}\mid P_{\ga_i})\cdot v_{P_{\ga_i}}\left(u^\frac{d_1}{M}\left(\frac{u^{d_1(n-1)}-1}{u^{d_1}-1}\right)^{\frac{n+1}{M}}\right) + e(Q_{\ga_i}\mid \bar{P}_0)\cdot v_{\bar{P}_0}\left(v^{\frac{d_2}{M}}\right) = \frac{d_2}{M}, $$
hence
$$e(R_{\ga_i,j}\mid Q_{\ga_i})=\frac{\frac{d_3}{M}(n^2-n+1)}{\gcd\left(\frac{d_3}{M}(n^2-n+1),\frac{d_2}{M}\right)},$$
where $R_{\ga_i,j}$ is a place of $\KK(u,v,s)$ lying over $Q_{\ga_i}$. The theory of Kummer extensions also gives the ramification indices
$$\frac{\frac{d_3}{M}(n^2-n+1)}{\gcd\left(\frac{d_3}{M}(n^2-n+1),v_{Q}(\ga)\right)}$$
of the places of $\KK(u,v,s)$ lying over $Q$, for all places $Q$ of $\KK(u,v)$. Then the different divisor of $\KK(u,v,s)/\KK(u,v)$ has degree
\begin{small}
$$\deg(\textrm{Diff})= d_1\left(m-\left(m,\frac{d_2}{M}\right)\right)+d_2\left(m-\left(m,\frac{d_1}{M}\right)\right)+\qquad\qquad\qquad\qquad$$
$$\qquad\qquad+d_1(n-2)d_2\left(m-\left(m,\frac{n+1}{M}\right)\right)+(d_1,d_2)\left(m-\left(m,\frac{d_1d_2(n^2-n)}{M(d_1,d_2)}\right)\right) = $$

$$ = d_1\left(m-\left(m,\frac{d_2}{M}\right)\right)+d_2\left(m-\left(m,\frac{d_1}{M}\right)\right) + \qquad\qquad\qquad $$
$$ \qquad\qquad + d_1(n-2)d_2\left(m-\left(m,\frac{n+1}{M}\right)\right)+(d_1,d_2)\left(m-\left(m,\frac{2d_1d_2}{M(d_1,d_2)}\right)\right), $$
\end{small}
where $m=d_3(n^2-n+1)/M$. Finally, the Riemann-Hurwitz formula applied to the extension $\KK(u,v,s)/\KK(u,v)$ provides the genus of $\cC_1$.

The curves $\cC_2$ and $\cC_3$ are both defined by equations of the form
$$
\cC_i: \left\{ \begin{array}{c}
S^{r}=U^{a}\left(U^{a(n-1)}-1\right)\left(\frac{U^{a(n-1)}-1}{U^{a}-1}\right)^n \\
V^{b}=U^{a}-1 \qquad\qquad\qquad\qquad\qquad\quad
\end{array} \right. .
$$

The genus of $\KK(u,v)$ is obtained as above:
$$ g(\KK(u,v)) = 1+\frac{1}{2}(ab-a-b-\gcd(a,b)). $$

Similar computations yield the degree of the different divisor of the Kummer extension $\KK(u,v,s)/\KK(u,v)$:
$$\deg(\textrm{Diff})= a\left(r-\gcd\left(r,b\right)\right)+b\left(r-\gcd\left(r,a\right)\right)+\qquad\qquad\qquad\qquad\qquad\qquad$$
$$\qquad\qquad+a(n-2)b\left(r-\gcd\left(r,n+1\right)\right)+\gcd(a,b)\left(r-\gcd\left(r,\frac{2ab}{\gcd(a,b)}\right)\right), $$
and the Riemann-Hurwitz formula applied to the extension $\KK(u,v,s)/\KK(u,v)$ provides the genus of $\cC_i$, $i=2,3$.

\end{proof}

\begin{remark}\label{61}
The previous results provide new equations of $\fqs$-maximal curves for many genera. To exemplify this, consider the case $n=5$. Then Theorem {\rm \ref{equations}} provides new equations for the following genera:
$$
\begin{array}{c}
37, 74, 109, 121, 148, 220, 242, 361, 442, 484, 724, 1450,  \\
160, 233, 469, 478, 496, 737, 1477, 1486  \\
\end{array}
$$
Up to our knowledge, the integers in the second row are new values in the spectrum of genera of $\FF_{5^6}$-maximal curves.
\end{remark}
%$$ \begin{array}{1} 74,76,148,220,241,242,442,484,724, \\ 26,36,52,72,120,146,154,240. $$ \end{array}

\section{The Galois groups}\label{quattro}
In this section we always assume $$ \gcd\left(d_1,d_2,d_3(n^2-n+1)\right)=1.$$
In some cases we are able to give an explicit description of the automorphism groups $H$ of order $\frac{(n+1)^2}{d_1d_2d_3}$ such that $Fix(H)=\mathbb K(u,v,w)$.
We also provide an alternative computation of the genus of $\cX/H$, by means of the Riemann-Hurwitz genus formula and \cite[Prop. 3.2]{FG}. 

We 
%MG
state  Proposition 3.2 from \cite{FG} in a slightly different form: in the original paper  the authors consider the model 
%MG
$\cC$ of the GK curve lying on the cone over the Hermitian curve $\cK$ with equation $Y^n+Y = X^{n+1}$;
it is not difficult to see that the same computations hold for the curve $\cX$ lying on the cone over the Hermitian curve $\cH: Y^{n+1}=X^{n+1}-1$. 
This relies on the fact that $\cC$ and $\cX$ are projectively equivalent, with a projectivity defined over $\fns$ which maps the Hermitian cone over $\cK$ to the Hermitian cone over $\cH$.

\begin{proposition}\label{genusFG}\cite[Prop. 3.2]{FG}
Let $L$ be a tame subgroup of $Aut(\cX)$, $\bar{L}$ the projection of $L$ to $PGU(n,3)$ and $L_\Lambda=L \cap \Lambda$, where $\Lambda$ is defined in equation \eqref{Lambda}. Assume that no non-trivial element in $\bar{L}$ fixes a point in $\cH\setminus \cH(\fns)$, where $\cH$ is the Hermitian curve $Y^{n+1}=X^{n+1}-1$. Then:
\begin{small}
$$g_L= g_{\bar{L}}+\frac{(n^3+1)(n^2-|L_\Lambda|-1)-|L_\Lambda|(n^2-n-2)}{2|L|},$$
\end{small} 
where $g_L$ is the genus of the quotient curve $\cH/\bar{L}$.
\end{proposition}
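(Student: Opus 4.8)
The plan is to exploit the cyclic covering $\pi\colon\cX\to\cH$ of degree $n^2-n+1$ obtained by forgetting the $Z$-coordinate, whose Galois group is exactly $\Lambda$, and to transport the genus computation onto the two quotient curves through a commutative square of coverings. Throughout, write $g_L=g(\cX/L)$ and $g_{\bar L}=g(\cH/\bar L)$ (this is the intended reading of the statement), set $N=n^2-n+1$, and recall the short orbit $\bar\cO=\cH(\fns)$ of $n^3+1$ points. First I would record the ramification of $\pi$. Writing the first equation of $\cX$ as $Z^{N}=f$ with $f=Y(X^{n^2}-X)/(X^{n+1}-1)\in\KK(\cH)$, and using that on $\cH$ one has $(X^{n^2}-X)/(X^{n+1}-1)=X\sum_{i=0}^{n-2}X^{(n+1)i}$, the covering $\pi$ is a tame Kummer covering which is totally ramified over each of the $n^3+1$ points of $\cH(\fns)$ and unramified elsewhere. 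This is the content carried over from \cite[Sec.~3]{FG} via the $\fns$-projectivity relating $\cC$ and $\cX$; as a check, Riemann--Hurwitz for $\pi$ with $g_\cH=(n^2-n)/2$ returns the known value $g_\cX=1+\frac12(n^3+1)(n^2-2)$.

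Next I would form the commutative square with $\pi\colon\cX\to\cH$ and $\psi\colon\cH\to\cH/\bar L$ across the top and right, the two quotient maps $\cX\to\cX/L$ and $\cH\to\cH/\bar L$ down the sides, and the induced covering $\phi\colon\cX/L\to\cH/\bar L$ along the bottom. Since $\bar L=L/L_\Lambda$, one has $\deg\psi=|\bar L|=|L|/|L_\Lambda|$ and $\deg\phi=N/|L_\Lambda|=:m$. The heart of the argument is the ramification of $\phi$. Any branch point of $\phi$ lies under a branch point of $\cX\to\cH/\bar L$, hence over a branch point of $\pi$ (the short orbit) or of $\psi$; by hypothesis every point with nontrivial $\bar L$-stabilizer lies in $\cH(\fns)$, so all branch points of $\phi$ are images of short-orbit points. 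Over a short-orbit point $Q$, $\pi$ is totally ramified, so the fibre is a single point $P$, and since $\Lambda$ fixes every point of $\cH$ and permutes each fibre of $\pi$, the subgroup $L_\Lambda$ fixes $P$; thus the stabilizer $L_P$ has order $|L_\Lambda|\,|\bar L_Q|$. Comparing the two factorizations of $\cX\to\cH/\bar L$ at $P$, where all maps are tame and Galois so local indices equal stabilizer orders, gives $N\,|\bar L_Q|=|L_\Lambda|\,|\bar L_Q|\,e_\phi$, whence $e_\phi=m$. Hence $\phi$ is totally ramified over each image of a short-orbit point and unramified elsewhere.

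Finally, let $t$ be the number of $\bar L$-orbits on $\cH(\fns)$. Riemann--Hurwitz for $\phi$ gives $2g_L-2=m(2g_{\bar L}-2)+t(m-1)$, since each of the $t$ branch points carries a single point of index $m$. Riemann--Hurwitz for $\psi$, which ramifies only over those $t$ images with orbit-stabilizer orders $s_j$ satisfying $\sum_j|\bar L|/s_j=n^3+1$, gives $2g_\cH-2=|\bar L|(2g_{\bar L}-2)+t|\bar L|-(n^3+1)$. I would solve the second relation for $t$, substitute into the first, and simplify: the telescoping collapses the coefficient of $g_{\bar L}$ to $1$, leaving $g_L=g_{\bar L}+\frac{(m-1)|L_\Lambda|(n^3+n^2-n-1)}{2|L|}$. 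Using $(m-1)|L_\Lambda|=N-|L_\Lambda|$, together with $n^3+1=(n+1)N$ and $n^3+n^2-n-1=(n+1)^2(n-1)$, the numerator rearranges to $(n^3+1)(n^2-|L_\Lambda|-1)-|L_\Lambda|(n^2-n-2)$, which is exactly the claimed formula.

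The main obstacle is the ramification analysis in the second step: pinning down the branch locus of $\pi$ as precisely the short orbit with full ramification, and then carrying out the local index comparison for $\phi$, which relies on both the hypothesis that no nontrivial element of $\bar L$ fixes a point outside $\cH(\fns)$ and the fact that $L_\Lambda$ fixes the unique point of $\cX$ above each short-orbit point. Once $\phi$ is known to be totally ramified of degree $m$ over the $t$ short-orbit images and unramified elsewhere, the elimination of $t$ between the two Riemann--Hurwitz identities and the final simplification are routine algebra.
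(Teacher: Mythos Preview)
Your proof is correct. The key steps---identifying the branch locus of the Kummer cover $\pi:\cX\to\cH$ as exactly the $n^3+1$ points of $\cH(\fns)$, each with full index $N$; computing the ramification of $\phi$ via multiplicativity of local indices in the two factorizations of $\cX\to\cH/\bar L$; and eliminating $t$ between the two Riemann--Hurwitz relations---are all sound, and the final algebraic rearrangement using $(n^2-n+1)(n^3+n^2-n-1)=(n^3+1)(n^2-1)$ checks.

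As for comparison with the paper: the paper does not prove this proposition. It simply quotes \cite[Prop.~3.2]{FG} and remarks that the result, originally stated for the model $\cC$ lying over the Hermitian curve $Y^n+Y=X^{n+1}$, transfers to $\cX$ via the $\fns$-projectivity $\varphi$ of Section~\ref{NewModel}. So your write-up supplies an argument where the present paper gives none; your approach through the commutative square and the double application of Riemann--Hurwitz is the natural one, and is presumably close to what \cite{FG} does.

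Two small points of presentation. First, $\phi:\cX/L\to\cH/\bar L$ need not itself be Galois (that would require $L\trianglelefteq L\Lambda$), so your phrase ``all maps are tame and Galois'' overstates things. What you actually use is only multiplicativity of ramification indices in the tower $\cX\to\cX/L\to\cH/\bar L$, together with the fact that the two outer quotient maps $\cX\to\cX/L$ and $\cX\to\cX/(L\Lambda)=\cH/\bar L$ \emph{are} Galois with tame groups, so that their local indices at $P$ equal $|L_P|$ and $|(L\Lambda)_P|$; this already yields $e_\phi=|(L\Lambda)_P|/|L_P|=m$. Second, in the Riemann--Hurwitz identity for $\psi$ you write that $\psi$ ``ramifies only over those $t$ images''. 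Some of the $t$ orbits on $\cH(\fns)$ may well be long (trivial stabilizer), but this does not matter: such an orbit contributes zero to the different, and your formula $2g_\cH-2=|\bar L|(2g_{\bar L}-2)+t|\bar L|-(n^3+1)$ holds verbatim regardless.
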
 

\begin{case}\label{Hcase1}
Suppose that $d_1\mid 3d_3$ with $\gcd(d_1,d_2)=1$. Then $\KK(u,v,w)$ is the quotient curve of $\cX$ with respect to the group
$$H=\left\{(X,Y,Z,T)\mapsto(\gl^3b^nX,bY,\gl Z,T)\mid b^{\frac{n+1}{d_1d_2}}=\gl^{\frac{n+1}{d_3}}=1 \right\}.$$

In fact, by Remark \ref{rem7dic}, the size $\frac{(n+1)^2}{d_1d_2d_3}$ of $H$ coincides with the degree of the extension 
$$\KK(x,y,z)/\KK(u,v,w).$$ Also, $u$, $v$, and $w$ are all fixed by $H$ since
$$
\lambda^{(n+1)/d_3}=1,\qquad b^{(n+1)/d_2}=(b^{(n+1)/d_2d_1})^{d_1}=1,
$$
and
$$
(\gl^3b^n)^{(n+1)/d_1}=(\gl^{(n+1)/d_3})^{3d_3/d_1} b^{-((n+1)/d_1)}=(b^{-((n+1)/d_1d_2)})^{d_2}=1.
$$
The projection $\bar{H}$ of $H$ on $\textrm{PGU}(3,n)$ is
$$\bar{H}=\left\{[\gl^3b^n,b,1] \mid b^{\frac{n+1}{d_2d_1}}=\gl^{\frac{n+1}{d_3}}=1 \right\},$$
$$ \textrm{with}\quad |\bar{H}|=\frac{(n+1)^2}{d_1d_2d_3\gcd\left(3,\frac{n+1}{d_3}\right)}, $$
where $[\gl^3b^n,b,1]$ denotes the automorphism $(X,Y,T)\mapsto(\gl^3b^nX,bY,T)$.
No non-trivial element in $\bar{H}$ fixes a point in $\cH\setminus\cH(\fns)$, and 
$$H_{\Lambda}=\left\{\left[1,1,\gl,1\right]\mid \gl^{n^2-n+1}=\gl^{\frac{n+1}{d_3}}=1 \right\}$$
has size $\gcd\left(\frac{n+1}{d_3},3\right)$. Then by Proposition \ref{genusFG} the genus of $\cX/H$ is
\begin{small}
$$ g_{H} = g_{\bar{H}} + \frac{d_1d_2d_3[(n^3+1)(n^2-\gcd(3,(n+1)/d_3)-1)-\gcd(3,(n+1)/d_3)(n^2-n-2)]}{2(n+1)^2}, $$
\end{small}
where $g_{\bar{H}}$ is the genus of $\cH/\bar{H}$.
The only points of $\cH$ that can be fixed by a non-trivial element 
%$h_{\gl,b}\in
in $
\bar{H}$ are the points on the fundamental frame. It is easily seen that
\begin{itemize}
\item[(i)] $[\gl^3b^n,b,1]$ fixes $P_i=(0,\ga_i,1)$, $i=1,\ldots,n+1$, if and only if $b=1$;
\item[(ii)] $[\gl^3b^n,b,1]$ fixes $Q_j=(\gb_j,0,1)$, $j=1,\ldots,n+1$, if and only if $\lambda^3=b$;
\item[(iii)] $[\gl^3b^n,b,1]$ fixes $R_k=(\gb_k,1,0)$, $k=1,\ldots,n+1$, if and only if $\lambda^3=b^2$.
\end{itemize}

Let $\bar{H}_P$ denote the stabilizer of $P$ in $\bar{H}$. We distinguish two cases.

({\bf A}) $3$ does not divide $(n+1)/d_3$. Then $\,\gl\mapsto\gl^3\,$ is an automorphism of the multiplicative group of the $((n+1)/d_3)$-th roots of unity.

\begin{itemize}

\item[(i)]
We have
$$ \bar{H}_{P_i}=\left\{[\gl^3,1,1] \mid \gl^{\frac{n+1}{d_3}}=1\right\},\quad\textrm{hence}\quad |\bar{H}_{P_i}|= \frac{n+1}{d_3}. $$

\item[(ii)]
We have
$$ \bar{H}_{Q_j}=\left\{[1,b,1] \mid b^{\frac{n+1}{d_2d_1}}=1,b=\gl^3\textrm{ for some }\gl\textrm{ with } \gl^{\frac{n+1}{d_3}}=1\right\},$$hence
$$|\bar{H}_{Q_j}|= \gcd\left(\frac{n+1}{d_3},\frac{n+1}{d_1d_2}\right). $$

\item[(iii)] We distinguish two subcases.

\begin{itemize}

\item $\frac{n+1}{d_1d_2}$ is even. Then
$$ \bar{H}_{R_k}=\left\{[b,b,1] \mid (b^2)^{\frac{n+1}{d_3}}=1,(b^2)^{\frac{n+1}{2d_1d_2}}=1 \right\} \quad\textrm{and}\quad b\neq-b, $$
hence
$$ |\bar{H}_{R_k}|=2\gcd\left(\frac{n+1}{d_3},\frac{n+1}{2d_1d_2}\right)=\gcd\left(\frac{2(n+1)}{d_3},\frac{n+1}{d_1d_2}\right). $$

\item $\frac{n+1}{d_1d_2}$ is odd. Then $\,b\mapsto b^2\,$ is an automorphism of the multiplicative group of the $((n+1)/d_1d_2)$-th roots of unity, and
$$ \bar{H}_{R_k}=\left\{[b,b,1] \mid \gl^{\frac{n+1}{d_3}}=b^{\frac{n+1}{d_1d_2}}=1,\gl^3=b^2 \right\}; $$
hence,
$$ |\bar{H}_{R_k}|=\gcd\left(\frac{n+1}{d_3},\frac{n+1}{d_1d_2}\right)=\gcd\left(\frac{2(n+1)}{d_3},\frac{n+1}{d_1d_2}\right). $$

\end{itemize}

\end{itemize}

Therefore, if $3\nmid(n+1)/d_3$ then the Hurwitz formula applied to the covering $\cH\rightarrow\cH/\bar H$ provides the genus of $\cH/\bar H$:
\begin{small}
$$
g_{\bar{H}}=1+\frac{1}{2\frac{(n+1)^2}{d_1d_2d_3}}\left[n^2-n-2-(n+1)\left(\frac{n+1}{d_3}+\gcd(\frac{n+1}{d_1d_2},\frac{n+1}{d_3})+\gcd(\frac{n+1}{d_1d_2},\frac{2(n+1)}{d_3})-3\right)\right],
$$
\end{small}
that is,
\begin{small}
$$
g_{\bar{H}}=1+\frac{d_1d_2d_3}{2(n+1)}\Big[n-2-\frac{n+1}{d_3}-\gcd(\frac{n+1}{d_1d_2},\frac{n+1}{d_3})-\gcd(\frac{n+1}{d_1d_2},\frac{2(n+1)}{d_3})+3\Big],
$$
\end{small}
hence
\begin{small}
$$ g_{H} = 1+\frac{d_1d_2d_3}{2(n+1)}\left(n+1-\frac{n+1}{d_3}-\gcd(\frac{n+1}{d_1d_2},\frac{n+1}{d_3})-\gcd(\frac{n+1}{d_1d_2},\frac{2(n+1)}{d_3})\right)+$$
\begin{equation}\label{GGG}
+\frac{d_1d_2d_3\left(n^3-2n^2+n\right)}{2}.
\end{equation}
\end{small}

({\bf B}) $3$ divides $(n+1)/d_3$. Let $\gl'=\gl^3$, then
$$ \bar{H}=\left\{[\gl'b^n,b,1] \mid (\gl')^{\frac{n+1}{3d_3}}=b^{\frac{n+1}{d_1d_2}}=1 \right\}. $$

The same arguments yield 

%yy:=(n+1) div (d1*d2);
%K:=((n^3+1)*(n^2-4)-3*(n^2-n-2));
%card:=((n+1)^2) div (d1*d2*d3);
%K1:=K div (2*card);
%A:=(n+1) div (3*d3);
%B:=Gcd(A,yy);
%C:=Gcd(2*A,yy);
%ram:=(n+1)*(A+B+C-3);
%gbarH':=1+ ( (n^2-n-2-ram) div ((2*card) div 3) );
%gH':=gbarH'+K1;

\begin{small}
$$ g_{H} = 1+\frac{3d_1d_2d_3}{2(n+1)}\left(n+1-\frac{n+1}{3d_3}-\gcd(\frac{n+1}{d_1d_2},\frac{n+1}{3d_3})-\gcd(\frac{n+1}{d_1d_2},\frac{2(n+1)}{3d_3})\right) + $$
$$ + \frac{d_1d_2d_3\left(n^3-2n^2-n+2\right)}{2}. $$
\end{small}

\end{case}

\begin{case}\label{Hcase2}
Suppose  $d_1\mid d_2$,  and $(d_1,d_3(n^2-n+1))=1$. Then 
 $\KK(u,v,w)$ is the quotient curve of $\cX$ with respect to the group
$$ H=\left\{(X,Y,Z,T)\mapsto(\gl^3b^nX,bY,\gl Z,T)\mid b^{\frac{n+1}{d_2}}=\gl^{\frac{n+1}{d_1d_3}}=1 \right\}. $$
This follows from 
$$
\lambda^{(n+1)/d_3}=(\lambda^{(n+1)/d_1d_3})^{d_1}=1,\qquad b^{(n+1)/d_2}=1,
$$
and
$$
(\gl^3b^n)^{(n+1)/d_1}=(\gl^{\frac{n+1}{d_1d_3}})^{3d_3}b^{-((n+1)/d_1)}=(b^{-((n+1)/d_2)})^{d_2/d_1}=1.
$$
Similar computations provide the genus of $\cX/H$:
\begin{small}
$$ g_{H} = 1+\frac{d_1d_2d_3m}{2(n+1)}\left(n+1-\frac{n+1}{d_1d_3m}-\gcd(\frac{n+1}{d_2},\frac{n+1}{d_1d_3m})-\gcd(\frac{n+1}{d_2},\frac{2(n+1)}{d_1d_3m})\right) + $$
$$ + \frac{d_1d_2d_3\left[n^3-2n^2+(2-m)n+m-1 \right]}{2}, $$
\end{small}
where $m=\gcd(3,(n+1)/(d_1d_3))$.

\end{case}

\section{Another family of Galois subcovers of $\cX$}

In this section we consider another subgroup of the group $G$ given in \eqref{groupG}.
Let $c \mid(n+1)$, $d\mid(n^2-n+1)$, and consider the following automorphism group $K$ of $\cX$ of size $(n^3+1)/(cd)$:
$$ K=\left\{(X,Y,Z,T)\mapsto(b^{-1}X,bY,\gl Z,T)\mid b^\frac{n+1}{c}=1, \gl^\frac{n^2-n+1}{d}=1 \right\}. $$
%MG
Consider the following rational functions
$$ u=x^{\frac{n+1}{c}},\quad v=xy,\quad w=z^{\frac{n^2-n+1}{d}} $$
in the function field $\KK(x,y,z)$ of $\cX$; then the following relations hold:
\begin{equation}\label{K'}
w^d = v\left(1+u^c+u^{2c}+\ldots+u^{(n-2)c}\right),\quad v^{n+1}=u^{2c}-u^c. \end{equation}
In the double field extension
$ \KK(u,v,w)\subseteq Fix(K)\subseteq\KK(x,y,z) $ 
we have
$$ [\KK(x,y,z):\KK(u,v,w)]\leq\frac{n^3+1}{cd}=[\KK(x,y,z):Fix(K)], $$
which implies $Fix(K)=\KK(u,v,w)$.

%MG
Equations \eqref{K'} define an irreducible curve. To show this, let $P=(0,a)$ be an affine point of the Hermitian plane curve $\cH:Y^{n+1}=X^{n+1}-1$, and let $\bar{P}$ be a place of the curve $\cW:V^{n+1}=U^{2c}-U^c$ centered at the image $\varphi(P)$ of $P$ under the rational map
$$ \varphi:\cH\rightarrow \cW,\quad \varphi(X,Y,T)=(X^{\frac{n+1}{c}},XY,T) .$$
The rational function $\gb=xy(1+x^{n+1}+x^{2(n+1)}\ldots+x^{(n-2)(n+1)})\in\KK(x,y)$ has valuation $v_P(\gb)=1$ at $P$, hence the pull-back $\ga=v(1+u^c+\ldots+u^{(n-2)c})\in\KK(u,v)$ of $\gb$ has valuation $v_{\bar P}(\ga)=1$ at $\bar P$, since $v_P(\gb)=e(P\mid\bar{P})\cdot v_{\bar P}(\ga)$.
Hence 
%MG
%the equations \eqref{K'} are irreducible, i.e. 
the quotient curve $\cX/K$ has irreducible equations
\begin{equation}\label{XsuK}
\cX/K:\left\{
\begin{array}{c}
W^d = V\left(1+U^c+U^{2c}+\ldots+U^{(n-2)c}\right) \\
V^{n+1}=U^{2c}-U^c \qquad\qquad\qquad\qquad\qquad\quad
\end{array}\right. .
\end{equation}
By the Hurwitz formula applied to the tame covering $\cX\rightarrow\cX/K$, it is easy to check that the genus of $\cX/K$ is
\begin{equation}\label{gXsuK}
 g(\cX/K) = \frac{c}{2}\left[(d-1)n^2+n-d-\gcd\left(2,\frac{n+1}{c}\right)\right]+1.
 \end{equation}
%(The same value is obtained by applying the proposition 3.2 in Fanali-Giulietti as we did above for $H'$).

\section{New examples of maximal curves not (Galois) covered by the Hermitian curve}\label{62}

Let a curve $\cY$ be a subcover of the Hermitian curve $\cH$ by the $\fqs$-rational map
$$ \varphi: \cH \rightarrow \cY. $$
Then for the degree $\deg(\varphi)$ we have the following bounds:
$$ \frac{\cH(\fqs)}{\cY(\fqs)}\leq \deg(\varphi) \leq \frac{2g(\cH)-2}{2g(\cY)-2}. $$
In particular, the lower bound $L_{\cH,\cY}=\cH(\fqs)/\cY(\fqs)$ and the upper bound $U_{\cH,\cY}=(2g(\cH)-2)/(2g(\cY)-2)$ satisfy $\left\lceil{L_{\cH,\cY}}\right\rceil \leq \left\lfloor{U_{\cH,\cY}}\right\rfloor $.

Therefore, a curve $\cY$ having $\left\lceil{L_{\cH,\cY}}\right\rceil > \left\lfloor{U_{\cH,\cY}}\right\rfloor $ cannot be a subcover of the Hermitian curve. By applying this argument to the curves given in Theorems \ref{equations} and \ref{genera}, we get many new examples of curves which are not covered by the Hermitian curve.

To exemplify this, we list in the table below some genera of curves not covered by the Hermitian curve. We remark that for such curves we have both the genus and explicit equations.
%MG the Galois group of the extension is not the automorphism group of the curve.
%; for some of them, we also have an explicit description of the automorphism group.

\begin{small}
\begin{center}
\begin{table}[htbp]
\caption{New maximal curves not covered by the Hermitian curve}\label{tabella}
\begin{tabular}{|c|c|c|c|}
\hline $g$ & $n$ & $(d_1,d_2,d_3)$ & $\textrm{Reference}$ \\
\hline & & (1,18,6), (2,9,6), (2,18,3), (2,18,6), (3,18,6), & \\
 233416 & 17 & (6,9,6), (6,18,3), (6,18,6), (9,2,6), (9,6,6), & Thm. \ref{genera} \eqref{gC1},\eqref{gCi} \\
 & & (9,18,2), (9,18,6), (18,1,6), (18,2,3), (18,2,6), & \\
 & & (18,3,6), (18,6,3), (18,6,6), (18,9,2), (18,9,6) & \\
\hline 233398 & 17 & (9,18,2) & Thm. \ref{genera} \eqref{gCi} \\
\hline & & (1,24,8), (8,3,8), (24,8,1), (24,1,8), (2,24,8),  & \\
1064701 & 23 & (3,8,8), (3,24,8), (4,24,8), (6,8,8), (6,24,8), & Thm. \ref{genera} \eqref{gC1},\eqref{gCi} \\
 & & (8,3,8), (8,6,8), (8,12,8), (8,24,1), (8,24,2) & \\
\hline 1064689 & 23 & (2,24,8), (4,24,8), (6,8,8), & Thm. \ref{genera} \eqref{gCi} \\
 & &  (6,24,8), (8,6,8), (8,12,8) & \\
\hline 3206257 & 23 & (2,24,24), (4,24,24), (6,24,24), (8,6,24), (8,12,24) & Thm. \ref{genera} \eqref{gCi} \\
\hline 3402406 & 29 & (30,10,1), (10,30,1), (10,15,2), & Thm \ref{genera} \eqref{gC1} \\
 & & (30,2,5), (10,6,5), (10,3,10) & \\
\hline 5570731 & 32 & (33,11,1), (11,33,1), (11,3,11) & Thm \ref{genera} \eqref{gC1} \\
\hline
\end{tabular}
\end{table}
\end{center}
\end{small}

\begin{remark}
Let $\cH$ be the Hermitian curve over $\fqs$, and $\cY$ an $\fqs$-maximal curve of genus $g$ which is $\fqs$-covered by $\cH$. If $g>f(q)$, where
$$ f(q) = \frac{\sqrt{q^5+2q^4+q^3+q^2+2q+1}-q^2-1}{2q}, $$
then the degree of the covering $\cH\rightarrow\cY$ is uniquely determined as the unique integer $d$ such that
$$ L_{\cH,\cY} \leq d \leq U_{\cH,\cY}. $$
\end{remark}
\begin{proof}
By direct computation, $g>f(q)$ is equivalent to $U_{\cH,\cY} - L_{\cH,\cY} < 1$, which implies $\left\lceil{L_{\cH,\cY}}\right\rceil = \left\lfloor{U_{\cH,\cY}}\right\rfloor $.
\end{proof}

\begin{theorem}\label{NotGaloisCovered}
Let $n\geq7$ be a power of a prime $p$, and $k$ a divisor of $n+1$ with $k<\sqrt{n+1}+1$. Define $d_1=(n+1)/k$, $d_2=1$, and $d_3=n+1$. 
Then the curve $\cC_1$ given in Theorem {\rm \ref{equations}} is not Galois covered by the Hermitian curve $\cH$ over $\FF_{n^6}$.
\end{theorem}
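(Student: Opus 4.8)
The plan is to argue by contradiction, using that a Galois covering of $\cH$ corresponds to a subgroup of $\PGU(3,q)=\aut(\cH)$, where $q=n^3$ and $\cH$ denotes the Hermitian curve over $\FF_{n^6}=\FF_{q^2}$. First I would record the genus. Since $d_2=1$ forces $M=\gcd(d_1,1,(n+1)(n^2-n+1))=1$, Remark \ref{rem7dic} and Theorem \ref{equations} identify $\cC_1$ with $\cX/H$ for $|H|=k$, and a direct simplification of \eqref{gC1} (using the factorization $n^4-n^3-n^2+2n-1=(n-1)(n^3-n+1)$) gives
$$2g(\cC_1)-2=\frac{(n^2-1)(n^3-n+1)}{k}-\varepsilon,\qquad \varepsilon=\gcd\Big(n^3+1,\tfrac{2(n+1)}{k}\Big),$$
so that $g(\cC_1)\sim n^5/(2k)$.

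Next I would pin down the only possible degree of a covering $\cH\to\cC_1$. Using $k<\sqrt{n+1}+1$ one checks $g(\cC_1)>f(q)$, so by the remark following Table \ref{tabella} the degree of any covering $\cH\to\cC_1$ is the unique integer in $[L_{\cH,\cC_1},U_{\cH,\cC_1}]$; a computation with $g(\cH)=q(q-1)/2$ and $\cH(\FF_{q^2})=q^3+1$ shows $\lceil L_{\cH,\cC_1}\rceil=\lfloor U_{\cH,\cC_1}\rfloor=kn$. Hence, if $\cC_1=\cH/\Gamma$ were Galois, then $\Gamma\le\PGU(3,q)$ with $|\Gamma|=kn$; in particular $p\mid|\Gamma|$ since $p\mid n$, so $\Gamma$ contains non-trivial unipotent elements and its Sylow $p$-subgroup $S$ has order $n=p^{t}$.

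The core is then a group-theoretic reduction followed by a genus estimate. Because $|\Gamma|=kn$ is small, the subfield subgroups $\PSU(3,p^{s})$, the stabilizers of a conic, and the sporadic maximal subgroups in Mitchell's classification of subgroups of $\PGU(3,q)$ are excluded (by order, for $n\ge7$), and the number of Sylow $p$-subgroups of $\Gamma$ (dividing $k$ and $\equiv1\bmod p$) forces $S\trianglelefteq\Gamma$ in the relevant cases; since every non-trivial unipotent element fixes a single point of $\cH$, the group $\Gamma$ fixes the unique point $P\in\cH$ fixed by $S$ and sits inside the stabiliser $\cH_P\cong[q^{3}]\rtimes C_{q^2-1}$. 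I would then estimate the different of $\cH\to\cH/\Gamma$. The point $P$ is totally, wildly ramified; using the filtration of the Sylow $p$-subgroup of $\PGU(3,q)$ at $P$ (for which $G_0=G_1$, $G_2=\dots=G_{q+1}=Z$ and $G_{q+2}=1$), its contribution is at most $(q+1)(n-1)+kn-1\sim n^{4}$, attained when $S\le Z$. The remaining ramification is tame and comes from the semisimple elements of $\Gamma$, which are homologies; a careful count shows their total contribution is again $O(n^{4})$. Thus $\deg(\textrm{Diff})\sim n^{4}$, whereas matching $g(\cH/\Gamma)=g(\cC_1)$ in $2g(\cH)-2=kn\,(2g(\cC_1)-2)+\deg(\textrm{Diff})$ would require $\deg(\textrm{Diff})\sim 2n^{4}$. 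The resulting strict inequality $g(\cH/\Gamma)>g(\cC_1)$ contradicts $\cH/\Gamma\cong\cC_1$.

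The main obstacle is making the last step uniform over all admissible $\Gamma$ of order $kn$. The wild part at $P$ is under control once $S\le Z$ is seen to be extremal, but the tame part is delicate: a priori each of the up to $kn$ semisimple elements could fix $q+1$ points of $\cH$, so the crude bound $\deg(\textrm{Diff})\le(k+1)n^{4}$ is useless. The decisive point to prove is that the semisimple elements of $\Gamma\le\cH_P$ all lie in conjugates of a single cyclic torus $C_k\le\mu_{q+1}$, whence their fixed loci overlap heavily and the wild and tame contributions are \emph{anti-correlated} (taking $S\le Z$ makes $C_k$ centralise $S$ and collapses the tame locus), so that the total different is $O(n^{4})$ with the right constant in every case. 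Verifying this dichotomy, and confirming that the non-point-fixing subgroup types (the case of a non-normal Sylow $p$-subgroup and the subfield/conic subgroups) are genuinely excluded by $k<\sqrt{n+1}+1$ and $n\ge7$, is where the real work lies.
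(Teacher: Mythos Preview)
Your high-level shape is right and matches the paper up to the point where you pin down the degree: the paper also shows $\lceil L\rceil=\lfloor U\rfloor=kn$ under $k<\sqrt{n+1}+1$, so any Galois cover would have $|\Gamma|=kn$.

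For the reduction to a point-stabiliser, the paper's argument is considerably simpler than your Mitchell/Sylow route and avoids the case-splitting you flag as unfinished. One just observes that a Sylow $p$-subgroup $S$ of $\Gamma$ fixes a unique point $P$ and acts semiregularly elsewhere; if $\Gamma$ did not fix $P$, its $\Gamma$-orbit has size $\equiv 1\pmod n$ and $>1$, hence $\ge n+1$, and the pairwise-disjoint Sylow $p$-subgroups attached to the orbit points force $|\Gamma|\ge n^2>kn$. No classification is needed.

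The substantive divergence, and the genuine gap, is in the genus comparison. You aim for a one-sided asymptotic inequality $\deg(\mathrm{Diff})\sim n^4<2n^4$, hence $g(\cH/\Gamma)>g(\cC_1)$. This cannot be made to work uniformly. Using \cite[Th.~4.4]{GSX} one has, for $\Gamma$ of order $kn$ fixing a point and parameters $v+w=u$ with $n=p^u$,
\[
\deg(\mathrm{Diff})=n^{3}\bigl((k-1)p^{v}+p^{w}-1\bigr)+(k+1)n-2,
\]
so the extreme $(v,w)=(u,0)$ gives $\deg(\mathrm{Diff})=(k-1)n^{4}+O(n)$, which for $k\ge 3$ already meets or exceeds the target $2n^{4}-2n^{3}+O(n^{2})$; the inequality $g(\cH/\Gamma)>g(\cC_1)$ then fails at the leading order you rely on. Your ``anti-correlation'' heuristic would have to exclude exactly these group structures, and that amounts to controlling the pair $(p^{v},p^{w})$, i.e.\ to using the GSX formula in earnest rather than bounding it. The paper does precisely this: it equates the exact GSX expression with the exact $g(\cC_1)$, solves for
\[
k=\frac{2p^{3u}+p^{3u-w}-p^{3u-v}-p^{2u}+h}{p^{3u-w}+1},
\]
and shows by a short $p$-adic case analysis (splitting $v=0$, $v\le u/2$, $v>u/2$) that this is never an integer. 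That arithmetic step is what replaces your asymptotic estimate, and it is essential: the problem is two-sided (the different can be both below and above the target), so a strict inequality in one direction is not available.
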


\begin{proof}
Let $\cH$ be given in the form
$$ \cH: Y^{n^3+1} = X^{n^3+1}+X, $$
and denote by $P_\infty$ the point at infinity of $\cH$. 
Suppose $\cC_1$ is Galois covered by $\cH$;
%MG
 then $\cC_1$ is isomorphic to the quotient curve $\cH/N$ for some subgroup $N$ of $\aut(\cH)$.

The genus of $\cC_1$ can be computed by equation \eqref{GGG}, whence $L_{\cH,\cC_1}>kn-1$ if and only if
\begin{small}
$$ n^8-k(k-2)n^7-2n^6+n^5-(k-1)[2k+1-(k,2)]n^4+[2k-1-(k,2)]n^3-k^2n+2k>0, $$
\end{small}
while $U_{\cH,\cC_1}<kn+1$ if and only if
\begin{small}
$$ n^5-2kn^4+2(k-1)n^3-[k(k,2)-k-1]n^2-[(k,2)(k+1)+k-1]n+2k-(k,2)-1>0. $$
\end{small}
For $n\geq7$, both conditions are implied by the hypothesis $k<\sqrt{n+1}+1$.
Then $|N|=kn$.

Let $S$ be a Sylow $p$-subgroup of $N$. $S$ has a fixed $\FF_{n^6}$-rational point $P\in\cH$, since $S$ acts on $\cH(\FF_{n^6})$ and $|\cH(\FF_{n^6})|\equiv1(\mod\, p)$; as all Sylow $p$-subgroups of $\aut(\cH)$ are conjugate, then we assume w.l.o.g. that $S$ fixes $P_\infty$. Moreover, the action of $S$ on $\cH(\FF_{n^6})\setminus\left\{P_\infty\right\}$ is semiregular, i.e. each element of $S$ has no fixed point but $P_\infty$; %this holds more generally for zero p-rank curves, see \cite[Lemma 11.129]{HKT}.
hence the orbit $\cO$ of $P_\infty$ under $N$ satisfies $|\cO|\equiv1(\mod\,n)$.

Suppose $P_\infty$ is not fixed by $N$, then $|\cO|\geq n+1$. Hence, by the orbit-stabilizer theorem, $n$ divides the cardinality of the stabilizer $N_Q$ of $Q$ in $N$, for all $Q\in\cO$; then a Sylow $p$-subgroup $M_Q$ of $N_Q$ has size $n$. $M_Q$ and $M_R$ have trivial intersection for $Q\neq R$ in $\cO$, by the semiregularity of $S$. Therefore $N$ has at least $1+(n+1)(n-1)=n^2$ elements, and $k\geq n$, against the hypothesis.

%MG
Therefore the whole $N$ fixes $P_\infty$. If $k=1$, then $\cC_1\cong\cX$, the GK curve, and the thesis holds; otherwise, the genus of $\cH/N$ can be computed by \cite[Th. 4.4]{GSX}:
$$ g(\cH/N) = \frac{n^3-p^w}{2kn}\left(n^3-(k-1)p^v\right) = \frac{p^{5u}-p^{3u-v}-(k-1)p^{3u-w}+k-1}{2k}, $$
where $n=p^u$ and $v,w$ are non-negative integers satisfying $u=v+w$.

%MG
On the other hand, the genus of $\cC_1$ given in equation \eqref{GGG} is
$$ g(\cC_1) = \frac{n^5-2n^3+n^2+2k-1-h}{2k},\quad\textrm{where}\quad h=\left\{ \begin{array}{c}
n+2\quad\textrm{if}\;k\;\textrm{is even}  \\
1\qquad\;\;\textrm{if}\;k\;\textrm{is odd}
\end{array} \right. . $$
Hence $g(\cH/N)=g(\cC_1)$ reads
$$ k=\frac{2p^{3u}+p^{3u-w}-p^{3u-v}-p^{2u}+h}{p^{3u-w}+1}.$$

We have the following possibilities for $v$ and $w$: either $v=0$ and $w=u$, or $v\leq u/2$ and $w\geq u/2$, or $v>u/2$ and $w<u/2$. By considering separately each case, it is 
%MG
easily shown after some computation that
$$ \left(p^{3u-w}+1\right) \; \nmid \; \left(2p^{3u}+p^{3u-w}-p^{3u-v}-p^{2u}+h\right), $$
against the fact that $k$ is integer.
\end{proof}

\begin{theorem}\label{NotGaloisCoveredII}
Let $n>3$ be a prime power, $k$ a divisor of $n+1$ such that $3\nmid(n+1)/k$ and $k<\sqrt{n+1}+1$; if $3\mid(n+1)$, assume also $n\geq23$. Define $d_1=(n+1)/k$, $d_2=n+1$, and $d_3=1$.
Then the curve $\cC_1$ given in Theorem {\rm \ref{equations}} is not Galois covered by the Hermitian curve $\cH$ over $\FF_{n^6}$.
\end{theorem}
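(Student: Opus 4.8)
The plan is to mirror the structure of the proof of Theorem \ref{NotGaloisCovered}, adapting it to the new parameter choice $d_2=n+1$, $d_3=1$. First I would fix the same model $\cH: Y^{n^3+1}=X^{n^3+1}+X$ with its point $P_\infty$ at infinity, and suppose for contradiction that $\cC_1\cong\cH/N$ for some $N\leq\aut(\cH)$. The first computational step is to pin down $|N|$. With these new $d_i$, the genus of $\cC_1$ is given by the formula \eqref{GGG} (valid since the hypothesis $3\nmid(n+1)/d_3=(n+1)$ forces us into case (\textbf{A}) of Case \ref{Hcase1}), and I would substitute to get an explicit genus in terms of $n$ and $k$. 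I then compare $L_{\cH,\cC_1}=\cH(\FF_{n^6})/\cC_1(\FF_{n^6})$ against $U_{\cH,\cC_1}=(2g(\cH)-2)/(2g(\cC_1)-2)$, obtaining (as in the previous proof) two polynomial inequalities in $n,k$ whose simultaneous validity forces $\lceil L\rceil=\lfloor U\rfloor$ to be a single integer, namely the covering degree $|N|$. The role of the extra hypotheses ($3\nmid(n+1)/k$, $k<\sqrt{n+1}+1$, and $n\geq23$ when $3\mid(n+1)$) is precisely to make these inequalities hold, so this step is routine once the genus is in hand.

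Next I would run the same Sylow-subgroup argument to force $N$ to fix $P_\infty$. Let $S$ be a Sylow $p$-subgroup of $N$; since $|\cH(\FF_{n^6})|\equiv1\pmod p$ and all Sylow $p$-subgroups of $\aut(\cH)$ are conjugate, I may assume $S$ fixes $P_\infty$, and the action of $S$ on the remaining $\FF_{n^6}$-rational points is semiregular. The orbit–counting argument then shows that if $N$ did not fix $P_\infty$ the orbit would have length $\geq n+1$ and $N$ would contain at least $n^2$ elements, contradicting $|N|$ being small relative to $n$ (this is where $k<\sqrt{n+1}+1$ is used again). Hence $N\leq\aut(\cH)_{P_\infty}$, and I can invoke \cite[Th. 4.4]{GSX} to write $g(\cH/N)$ in the closed form involving $n=p^u$ and a splitting $u=v+w$.

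The final step is the Diophantine contradiction: setting $g(\cH/N)$ equal to the genus $g(\cC_1)$ computed from \eqref{GGG} yields an expression for $k$ as a ratio, and I would show that the denominator $p^{3u-w}+1$ cannot divide the numerator, so $k$ cannot be an integer. As in Theorem \ref{NotGaloisCovered}, this is handled by splitting into the cases $v=0,w=u$; $v\leq u/2\leq w$; and $v>u/2>w$, and checking divisibility in each. The main obstacle I anticipate is bookkeeping: with $d_2=n+1$ rather than $d_3=n+1$, the genus formula \eqref{GGG} produces a different numerator $h$ and different constant terms, so the two comparison inequalities and the final divisibility analysis must be recomputed from scratch rather than copied; in particular the hypothesis $3\nmid(n+1)/k$ is what keeps us in case (\textbf{A}) and guarantees the clean shape of \eqref{GGG}, and verifying that the exceptional requirement $n\geq23$ (when $3\mid(n+1)$) is exactly what is needed to close the gap between $\lceil L\rceil$ and $\lfloor U\rfloor$ will require the most careful estimation.
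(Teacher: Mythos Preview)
Your overall architecture---pin down $|N|=kn$ via the $L/U$ bounds, force $N$ to fix $P_\infty$ by the Sylow argument, then derive a Diophantine contradiction from \cite[Th. 4.4]{GSX}---matches the paper. But there is a genuine gap at the first computational step: you invoke formula \eqref{GGG} from Case~\ref{Hcase1}, and Case~\ref{Hcase1} simply does not apply to these parameters. Case~\ref{Hcase1} requires $d_1\mid 3d_3$ and $\gcd(d_1,d_2)=1$; with $d_1=(n+1)/k$, $d_2=n+1$, $d_3=1$ you have $\gcd(d_1,d_2)=(n+1)/k\neq1$ (unless $k=n+1$, which is excluded), and $d_1\mid 3d_3$ fails as well. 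The paper instead uses Case~\ref{Hcase2}, whose hypotheses $d_1\mid d_2$ and $\gcd(d_1,d_3(n^2-n+1))=1$ are exactly what the assumption $3\nmid(n+1)/k$ guarantees.

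This is not a cosmetic slip: the genus formula coming out of Case~\ref{Hcase2} involves $m=\gcd(3,(n+1)/(d_1d_3))=\gcd(3,k)$, and the two subcases $3\nmid(n+1)$ versus $3\mid(n+1)$ correspond to $m=1$ versus $m=3$. That is the real reason the paper splits into those two cases for the $L/U$ estimate (and why the extra assumption $n\ge23$ appears only when $3\mid(n+1)$). More importantly, equating $g(\cH/N)$ with the correct $g(\cC_1)$ does \emph{not} give a ratio with denominator $p^{3u-w}+1$; the paper obtains
\[
k=\frac{(1+\gcd(3,k))\,p^{3u}+p^{3u-w}-p^{3u-v}-p^{2u}-\gcd(3,k)\,p^{u}}{p^{3u-w}-p^{u}-2},
\]
so the divisibility analysis you plan to carry out is aimed at the wrong target. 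Once you switch to Case~\ref{Hcase2}, the rest of your outline goes through.
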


\begin{proof}
By the choise of $d_1$, $d_2$, and $d_3$, the genus of the curve $\cC_1$ can be computed as in Case \ref{Hcase2}.

By separating the cases $3\mid(n+1)$ and $3\nmid(n+1)$ and arguing as in the proof of Theorem \ref{NotGaloisCovered}, it is proved that a possible Galois covering has degree $kn$.

Suppose such a covering exists and $\cC_1\cong\cH/N$ with $N\leq\aut(\cH)$, then the same argument as in Theorem \ref{NotGaloisCovered} allows to apply \cite[Th. 4.4]{GSX} and yields the following expression for $k$:
$$ k=\frac{\left(1+\gcd(3,k)\right)p^{3u}+p^{3u-w}-p^{3u-v}-p^{2u}-\gcd(3,k)p^{u}}{p^{3u-w}-p^{u}-2}, $$
where $n=p^u$ with $p$ prime, and $v,w$ are non-negative integers satisfying $u=v+w$.
But
%MG
 a case analysis shows that this fraction cannot be an integer.
\end{proof}

\begin{theorem}\label{NotGaloisCoveredIII}
Let $n$ be a prime power, $\g$ a divisor of $n+1$, $\gd$ a divisor of $n^2-n+1$, and define $c=(n+1)/\g$, $d=(n^2-n+1)/\gd$. Suppose that one of the following holds:
\begin{itemize}
\item $n=5$, $\g=2$, and $\gd=1$;
\item $n\geq7$, $\g\leq2$, and $\gd\leq(\sqrt{2\g n+1}-1)/2$;
\item $n\geq7$, $\g>2$, and $\g\gd(\g\gd-\gd-1)<n$.
\end{itemize}
Then the curve $\cX/K$ with equations \eqref{XsuK} is not Galois covered by the Hermitian curve $\cH$ over $\FF_{n^6}$.
%Then a possible covering $\varphi:\cH\rightarrow\cX'/K'$, where $\cH$ is the Hermitian curve over $\FF_{n^6}$ and $\cX'/K'$ is the curve with equations \eqref{XsuK}, has degree $\deg(\varphi)=\g\gd n$.
\end{theorem}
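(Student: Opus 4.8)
The plan is to follow the same strategy used in the proofs of Theorems \ref{NotGaloisCovered} and \ref{NotGaloisCoveredII}, adapting it from the curves $\cC_1$ to the curve $\cX/K$ whose equations and genus are given in \eqref{XsuK} and \eqref{gXsuK}. First I would suppose, for contradiction, that $\cX/K$ is Galois covered by the Hermitian curve $\cH$ over $\FF_{n^6}$, so that $\cX/K\cong\cH/N$ for some subgroup $N\leq\aut(\cH)$. The initial step is to pin down the degree $|N|$ of the covering. Using the genus formula \eqref{gXsuK}, which gives $g(\cX/K)=\frac{c}{2}[(d-1)n^2+n-d-\gcd(2,(n+1)/c)]+1$, I would compute the bounds $L_{\cH,\cX/K}=\cH(\FF_{n^6})/(\cX/K)(\FF_{n^6})$ and $U_{\cH,\cX/K}=(2g(\cH)-2)/(2g(\cX/K)-2)$, and show that under each of the three numerical hypotheses on $(n,\g,\gd)$ one has $\lceil L_{\cH,\cX/K}\rceil=\lfloor U_{\cH,\cX/K}\rfloor$, forcing $|N|$ to equal a single explicit value. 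Since $(\cX/K)(\FF_{n^6})$ is determined by $g(\cX/K)$ via maximality ($q^2+1+2g(\cX/K)q$ with $q=n^3$), this is a routine, if delicate, inequality check; separating $\g\leq2$ from $\g>2$ matches the case split in the hypotheses, and the special case $n=5,\g=2,\gd=1$ is handled by direct computation.

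Once $|N|$ is determined, I would analyze the $p$-part of $N$ exactly as in Theorem \ref{NotGaloisCovered}. A Sylow $p$-subgroup $S$ of $N$ fixes an $\FF_{n^6}$-rational point of $\cH$, which may be assumed to be $P_\infty$ after conjugation, and $S$ acts semiregularly on $\cH(\FF_{n^6})\setminus\{P_\infty\}$. The orbit-stabilizer argument then shows that $N$ must fix $P_\infty$: if not, the orbit of $P_\infty$ would have size at least $n+1$, and each stabilizer would contribute a distinct Sylow $p$-subgroup of order $n$, forcing $|N|\geq n^2$, which I expect to contradict the value of $|N|$ found in the first step (given the smallness conditions $\g\gd(\g\gd-\gd-1)<n$ and $\gd\leq(\sqrt{2\g n+1}-1)/2$). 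With $N$ fixing $P_\infty$, the genus of $\cH/N$ can be computed explicitly by \cite[Th. 4.4]{GSX}, writing $n=p^u$ and $|N|=p^w\cdot m$ with a prime-to-$p$ part $m$, and introducing the splitting $u=v+w$ as in the earlier proofs.

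The final step is the arithmetic contradiction. Equating the genus of $\cH/N$ obtained from \cite[Th. 4.4]{GSX} with $g(\cX/K)$ from \eqref{gXsuK} yields an expression for the prime-to-$p$ part of $|N|$ as a ratio of two integer polynomials in $p^u$, $p^{u-v}$, $p^{u-w}$, and I would show this ratio cannot be an integer. As in Theorems \ref{NotGaloisCovered} and \ref{NotGaloisCoveredII}, this requires a case analysis on the relative sizes of $v$ and $w$ (typically $v=0,w=u$; $v\leq u/2\leq w$; and $v>u/2>w$), checking in each case that the denominator fails to divide the numerator. I expect this divisibility analysis to be the main obstacle: the presence of the two parameters $\g$ and $\gd$ (hence of $\gcd(3,\cdot)$-type terms and of the factor $d$ in the genus) makes the polynomial identities bulkier than in the single-parameter curves $\cC_1$, and one must verify the non-divisibility uniformly across the valuation cases while respecting the distinct hypotheses on $\g$. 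The auxiliary congruence $\gcd(2,(n+1)/c)$ appearing in \eqref{gXsuK} must also be tracked carefully, since its parity interacts with the divisibility conclusion.
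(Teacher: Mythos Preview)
Your plan is essentially the paper's own argument. The paper proceeds exactly as you outline: first it shows (by the $L_{\cH,\cY}$/$U_{\cH,\cY}$ squeeze) that any Galois covering $\cH\to\cX/K$ would have degree $\g\gd n$; then the Sylow/semiregularity argument forces $N$ to fix $P_\infty$ (here $\g\gd<n$ under each hypothesis, so $|N|=\g\gd n<n^2$, which is what you need); then \cite[Th.~4.4]{GSX} together with \eqref{gXsuK} yields the identity
\[
\gd\bigl[p^{3u}-\g p^{3u-w}+(\gcd(2,\g)-1)p^{u}-\g+\gcd(2,\g)\bigr]=-p^{3u}+p^{3u-v}-p^{3u-w}+p^{2u},
\]
which is shown to be impossible by a case analysis on $v,w$. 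Two small corrections to your write-up: there are no $\gcd(3,\cdot)$ terms here (that was specific to Theorem~\ref{NotGaloisCoveredII}); and the final step is more naturally phrased as ``the identity has no solution in $(v,w)$'' rather than ``the ratio is not an integer'', since $\g\gd$ is already fixed by hypothesis --- but this is only a matter of presentation.
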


\begin{proof}
By arguing as in the proof of Theorem \ref{NotGaloisCovered}, it is proved that a possible Galois covering has degree $\g\gd n$.

Suppose such a covering exists and $\cX/K\cong\cH/N$ with $N\leq\aut(\cH)$, then the same argument as in Theorem \ref{NotGaloisCovered} allows to apply \cite[Th. 4.4]{GSX} and 
%MG
yields the following identity:
\begin{equation}\label{condition}
\gd\left[p^{3u}-\g p^{3u-w}+\left(\gcd(2,\g)-1\right)p^{u}-\g+\gcd(2,\g)\right]=-p^{3u}+p^{3u-v}-p^{3u-w}+p^{2u},
\end{equation}
where $n=p^u$ with $p$ prime, and $v,w$ are non-negative integers with $u=v+w$. By case analysis, it can be shown that \eqref{condition} contradicts the hypothesis on the integers $\g$ and $\gd$.
\end{proof}

\end{document}